\documentclass[11pt]{article}

\usepackage[T1]{fontenc}
\usepackage[utf8]{inputenc}
\usepackage[english]{babel}

\usepackage[dvipsnames,svgnames]{xcolor}

\usepackage{amsmath,amssymb,amsfonts,amsthm,thmtools,upgreek,verbatim}
\usepackage{stmaryrd}
\usepackage{mathtools}
\let\epsilon\varepsilon
\let\phi\varphi

\usepackage{bbm}
\usepackage{bm}

\DeclareMathAlphabet\mbc{OMS}{cmsy}{b}{n}
\DeclareMathAlphabet\mathbfcal{OMS}{cmsy}{b}{n}

\usepackage[overload]{empheq}

\usepackage{booktabs}

\usepackage{enumerate,enumitem}

\usepackage{subcaption}
\usepackage[font={small}]{caption}

\usepackage{algorithm,algpseudocode}

\usepackage{fancyhdr}
\usepackage[a4paper]{geometry}
\usepackage{url}
\usepackage{hyperref}
\hypersetup{
    colorlinks = true,
    linkcolor = DarkOrchid,
    citecolor = BrickRed,
    urlcolor = blue
}
\usepackage[capitalise,noabbrev,nameinlink]{cleveref}
\newcommand{\crefpart}[2]{%
  \hyperref[#2]{\namecref{#1}~\labelcref*{#1}~\ref*{#2}}%
}
\crefname{equation}{}{}
\crefname{enumi}{}{}

\theoremstyle{definition}
\newtheorem{theorem}{Theorem}[section]
\newtheorem{definition}[theorem]{Definition}
\newtheorem{proposition}[theorem]{Proposition}
\newtheorem{corollary}[theorem]{Corollary}

\newtheorem{lemma}[theorem]{Lemma}
\newtheorem{remark}[theorem]{Remark}
\newtheorem{example}[theorem]{Example}

\usepackage{tcolorbox}

\usepackage{tikz}
\usepackage{tikz-cd}
\usetikzlibrary{decorations.markings,arrows}
\usetikzlibrary{shapes}
\usepackage{adjustbox}
\usepackage{pgf}
\usepackage{pgfplots}
\pgfplotsset{compat=1.18}

\DeclareMathOperator{\Fix}{Fix}
\DeclareMathOperator{\ran}{ran}
\DeclareMathOperator{\spa}{span}

\DeclareMathOperator{\Id}{Id}
\DeclareMathOperator{\Prox}{prox}
\DeclareMathOperator{\Diag}{diag}
\DeclareMathOperator{\Lap}{Lap}

\DeclareMathOperator*{\argmin}{argmin}

\newcommand{\Hi}{\mathcal{H}}
\newcommand{\E}{\mathbb{E}}

\newcommand{\R}{\mathbb{R}}
\newcommand{\N}{\mathbb{N}}
\newcommand{\Ei}{\mathcal{E}}

\newcommand{\norm}[1]{\left\lVert #1\right\rVert}

\usepackage{todonotes}
\reversemarginpar

\usepackage[doi=true,isbn=false,url=false,eprint=false,sortcites=true,maxnames=5,giveninits=true,backend=bibtex]{biblatex}
\usepackage{csquotes}
\DeclareNameAlias{author}{family-given}
\DeclareFieldFormat{doi}{\url{https://doi.org/#1}}
\renewbibmacro{in:}{}
\DeclareFieldFormat[article]{title}{#1}

\DeclareFieldFormat[article]{volume}{\textbf{#1}}
\AtEveryBibitem{%
  \ifentrytype{article}{%
    \clearfield{number}%
    \clearfield{issue}%
  }{}%
}
\DeclareFieldFormat[article]{pages}{#1}
\renewbibmacro*{journal+issuetitle}{%
  \usebibmacro{journal}%
  \setunit{\addspace}%
  \printfield{volume}%
  \setunit{\addcomma\space}%
  \printfield{pages}%
  \setunit{\addspace}%
  \printtext[parens]{\usebibmacro{date}}%
  \newunit
}
\renewbibmacro*{note+pages}{}

\usepackage{scalerel}
\usepackage{mleftright}
\mleftright
\hypersetup{linkcolor = DarkOrange}

\newcommand*\samethanks[1][\value{footnote}]{\footnotemark[#1]}
\newcommand{\oDelta}{B}
\newcommand{\oxi}{\mathcal{C}}
\newcommand{\ochi}{\mathcal{D}}
\newcommand{\ot}[1]{#1_{\scaleto{\otimes}{4pt}}}
\newcommand{\okernel}{\Lambda}

\title{On the optimal relaxation parameter of \\ graph-based splitting methods for subspaces}
\author{
    Francisco J. Arag\'on-Artacho\thanks{University of Alicante, Department of Mathematics, Alicante, \textsc{Spain}. e-mail:~\href{mailto:francisco.aragon@ua.es}{francisco.aragon@ua.es},\\\href{mailto:cesar.lopez@ua.es}{cesar.lopez@ua.es}}
    \and C\'esar L\'opez-Pastor\samethanks[1]
}
\date{}
\begin{document}

\maketitle

\begin{abstract}
   In this paper, we investigate the behavior of the family of graph-based splitting algorithms specialized to the problem of finding a point in the intersection of linear subspaces. The algorithms in this family, which encompasses several classical methods such as the Douglas--Rachford algorithm, are defined by a connected graph and a subgraph. Our main result establishes that when the graph and subgraph coincide, the optimal relaxation parameter is exactly $1$, thereby extending known results for the Douglas--Rachford algorithm to a much broader class of methods. Our analysis hinges on some properties of iso-averaged linear operators, which are defined as the average of an isometry and the identity, and are characterized by a specific symmetry of the norm of their relaxation.
\end{abstract}

\paragraph{Keywords} Splitting methods, Relaxation parameter, Linear convergence, Iso-averaged map, Subspaces, Projection operator

\paragraph{MSC2020} 65K05, 47N10, 47H09, 49M37, 90C25

\section{Introduction}

Let $\E$ be a Euclidean space, and let $f_1,\ldots,f_n:\E\to(-\infty,\infty]$ be proper, convex and lower semicontinuous functions. We are interested in the following optimization problem:
\begin{equation}\label{problem}
  \operatorname*{Min }_{x\in\E} f_1(x)+\cdots+f_n(x).
\end{equation}
By Fermat's rule, this is equivalent to finding some $x\in\E$ such that $0\in\partial(f_1+\cdots+f_n)(x)$, where $\partial f$ denotes the subdifferential. Under some constraint qualification (e.g., \cite[Theorem~23.8]{rockafellar1970convex}), we obtain an equivalent reformulation of \cref{problem} by rewriting it as
\begin{equation}\label{problem2}
  \text{Find }x\in\E\text{ such that }0\in \partial f_1(x)+\cdots+\partial f_n(x).
\end{equation}

In recent years, numerous algorithms have been designed to solve \cref{problem2} by treating each term of the sum separately. These methods, commonly referred to as \emph{splitting algorithms}, can be
applied in more general settings where $\E$ is a Hilbert space and the subdifferentials are replaced by arbitrary maximally monotone operators (see, e.g.,
\cite{tam2023frugal,condat2023proximal,campoy,malitsky2023resolvent,ryu20}).

For $n=2$, Lions and Mercier \cite{LM79} were the first to develop the celebrated Douglas--Rachford splitting algorithm (DRA). Given a starting point $v^0\in\E$ and a \emph{relaxation parameter} $\theta\in(0,2)$, the algorithm generates the following sequences of points:
\begin{align*}[left=\empheqlbrace]
x_1^{k+1}&=\Prox_{f_1}(v^k),\\
x_2^{k+1}&=\Prox_{f_2}(2x_1^{k+1}-v^k),\\
v^{k+1}&=v^k+\theta(x_2^{k+1}-x_1^{k+1}),
\end{align*}
for $k=0,1,\ldots$. Here ``$\Prox$'' refers to the \emph{proximal map}, which is defined for a function $f$ as $\Prox_f=(\Id+\partial f)^{-1}=\argmin_{z\in\E}\bigl\{f(z)+\frac{1}{2}\norm{z-\cdot}^2\bigr\}$, where $\Id$ denotes the identity map in~$\E$.
The DRA provides three convergent sequences with the following properties: $(v^k)_{k\in\N}$ converges to some point $v^*\in\E$, while $(x^k_1)_{k\in\N}$ and $(x^k_2)_{k\in\N}$ both converge to the same limit point $x^*:=\Prox_{f_1}(v^*)=\Prox_{f_2}(2x^*-v^*)$, which is a solution to \cref{problem2}.

In this work, we focus on the particular case in which $f_i=\imath_{U_i}$ is the \emph{indicator function} of a linear subspace $U_i\subseteq\E$. In this setting, we have that $\Prox_{f_i}=P_{U_i}$ is the \emph{projection operator} onto~$U_i$. If we denote by $R_{U_i}:=2P_{U_i}-\Id$ the \emph{reflector} to $U_i$, then the DRA can be rewritten as
\begin{equation*}
  v^{k+1}=T_\theta(v^k):=\frac{\theta}2R_{U_2}R_{U_1}(v^k)+\left(1-\frac{\theta}2\right)v^k,
\end{equation*}
which can be understood as a fixed-point iteration of the linear map $T_\theta$. Furthermore, it was proved in~\cite{cosineDR,relaxDR} that, for all $\theta\in(0,2)$, $T_\theta^k$ converges linearly to $P_{\Fix T}$; that is, there is some $M>0$ such that $\norm{T_\theta^k-P_{\Fix T}}\leq M\gamma(T_\theta)^k$ eventually, with $\gamma(T_\theta)\in(0,1)$ being the \emph{rate of linear convergence} and $\norm{\cdot}$ denoting the operator norm.

It is thus natural to question how the relaxation parameter affects the performance of the algorithm. For $\theta=1$, it was shown in \cite[Theorem 4.1]{cosineDR} that the optimal rate of convergence is $\gamma(T_1)=c_F(U_1,U_2)$, the \emph{cosine of the Friedrichs angle} between $U_1$ and $U_2$
(see \cite[Definition~9.4]{deutsch}). More generally, by \cite[Theorem 3.10]{relaxDR}, the optimal rate of convergence of the DRA for $\theta\in(0,2)$ is
\begin{equation}\label{optimal theta}
  \gamma(T_\theta)=\sqrt{\theta(2-\theta)c_F(U_1,U_2)^2+(1-\theta)^2}.
\end{equation}
We observe in \cref{optimal theta} that $\gamma(T_\theta)$ can be treated as a function of $\theta$, which attains its minimum at $\theta=1$, from where we deduce that this is the optimal relaxation parameter of the DRA.

The DRA can be applied to an arbitrary number $n$ of functions by using Pierra's reformulation~\cite{pierra}, which defines an equivalent problem involving two operators in the product space. However, this produces the noticeable drawback of increasing the number of variables $v^k$ that define the fixed-point map $T$ (which is referred to as \emph{lifting}). Ryu's seminal work \cite{ryu20} proved that for $n=3$, it is impossible to define a splitting algorithm without increasing the number of variables $v^k$. At the same time, he provided the first splitting algorithm with a lifting of two variables $v_1^k$ and $v_2^k$.
Later, Campoy \cite{campoy} and Malitsky and Tam~\cite{malitsky2023resolvent} independently designed the first splitting methods for an arbitrary number $n$ of operators with lifting $n-1$, with the latter authors proving that the minimal lifting for this class of algorithms is indeed $n-1$.

Recognizing the structural similarities among recently developed splitting algorithms, Bredies, Chenchene and Naldi~\cite{graph-drs} defined a unifying theoretical framework making use of the degenerate preconditioned tools from \cite{degenerate-ppp}. Their approach models the relationship between the variables $x_i^k$ and $v_j^k$ using a pair of directed graphs.
Many new algorithms can be devised using this framework, such as the \emph{parallel down algorithm} or the \emph{sequential algorithm} \cite{graph-drs}. For $n=3$ operators, the \emph{complete algorithm} was also proposed in that work. Later, it was extended to an arbitrary $n$ in \cite{graph-fb}, allowing forward computations of cocoercive functions.

A natural question thus emerges: which choice of graphs provides the best performance? Based on the exact expression for the limit point of a general graph-based splitting algorithm from~\cite{graph-linear}, numerical experiments were carried out in~\cite{graphnumerical} to study the optimal relaxation parameter and compare the performance. To this end, the number of iterations $k_\theta$ needed to satisfy the stopping criterion $\|T_\theta^{k_\theta}x-P_{\Fix T}x\|<\varepsilon$ for some $\varepsilon>0$ was computed. In particular, the authors observed in~\cite[Figure~2]{graphnumerical} that the equality $k_\theta=k_{2-\theta}$ holds whenever $G$ and $G'$ coincide, and that the optimal relaxation parameter is always $\theta=1$.

The goal of this paper is to provide the theoretical justification for these empirical observations and formally prove that the optimal relaxation parameter is indeed $\theta=1$ whenever both graphs coincide. The main contributions of our work are summarized as follows:
\begin{itemize}
  \item We introduce the concept of an iso-averaged linear map and compute the rate of linear convergence for any relaxation parameter. Their optimal relaxation parameter is $\theta=1$.
  \item We establish that the operator defining a graph-based splitting algorithm is iso-averaged if and only if the graphs are equal, thereby determining the optimal relaxation parameter.
  \item We present several pathological examples of graph-based algorithms whose fixed-point maps fail to be iso-averaged or even normal.
\end{itemize}

The remainder of this paper is organized as follows. In \cref{sec:iso}, we present some properties of normal operators and introduce the stronger notion of iso-averagedness. We then characterize iso-averaged maps with a certain symmetry of the norm and compute the linear rate of convergence of fixed-point algorithms defined by this class of maps. We specialize in \cref{sec:graph} to the case of graph-based splitting algorithms and state the main result proving the equivalence between iso-averagedness and the equality of the two graphs, deducing the optimal relaxation parameter.

\section{Relaxation of normal and iso-averaged maps}\label{sec:iso}

Given a linear map $T:\E\to\E$ and $\theta\in\mathbb{R}$, we define the $\theta$-\emph{relaxed map} as $T_\theta:=\theta T+(1-\theta)\Id$, where $\theta$ is known as the \emph{relaxation parameter}. When $\theta\neq0$, it is straightforward to see that $\Fix T_\theta=\Fix T$. Not only that, but the normality of  $T_\theta$ is also inherited by $T$, as the following lemma states. We recall that $T$ is said to be \emph{normal} if $T^*T=TT^*$, where $T^*$ denotes the adjoint mapping.

\begin{lemma}\label{Ttheta normal}
 A linear map $T:\E\to\E$  is normal if and only if $T_\theta$ is normal for all $\theta\in\mathbb{R}$.
\end{lemma}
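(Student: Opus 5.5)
The plan is a direct computation of the commutator of $T_\theta$ with its adjoint. Both implications come from one identity.

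First, since the adjoint is linear and $\Id^*=\Id$, we have $T_\theta^*=\theta T^*+(1-\theta)\Id$. Next, expand both products:
\begin{align*}
T_\theta^*T_\theta&=\theta^2T^*T+\theta(1-\theta)(T^*+T)+(1-\theta)^2\Id,\\
T_\theta T_\theta^*&=\theta^2TT^*+\theta(1-\theta)(T+T^*)+(1-\theta)^2\Id.
\end{align*}
Subtracting, the mixed terms and the identity terms cancel. This gives the key identity
\begin{equation*}
T_\theta^*T_\theta-T_\theta T_\theta^*=\theta^2\,(T^*T-TT^*)\qquad\text{for all }\theta\in\R.
\end{equation*}

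Both directions follow from this identity. If $T$ is normal, the right-hand side vanishes for every $\theta$, so every $T_\theta$ is normal. Conversely, suppose $T_\theta$ is normal for all $\theta\in\R$. Taking $\theta=1$ gives $T_1=T$, so $T$ is normal. More generally, normality of $T_\theta$ for any single $\theta\neq0$ already suffices, because we may divide the identity by $\theta^2$.

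There is no real obstacle here. The only point that needs care is the expansion, where one should check that the cross terms $\theta(1-\theta)(T+T^*)$ appear identically in both products.
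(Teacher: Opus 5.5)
Your proof is correct and is the same argument as the paper's. The paper also expands both products and reads off the identity $T_\theta^*T_\theta-T_\theta T_\theta^*=\theta^2(T^*T-TT^*)$, from which both directions follow; your version correctly labels the second product as $T_\theta T_\theta^*$, where the paper's display has a typo repeating $T_\theta^*T_\theta$.
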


\begin{proof}
By the definition of $T_\theta$, we have that
 \begin{align*}
 	T_\theta^*T_\theta&
 	=\theta^2T^*T+\theta(1-\theta)(T^*+T)+(1-\theta)^2\Id,\\
 	T_\theta^*T_\theta&=\theta^2TT^*+\theta(1-\theta)(T^*+T)+(1-\theta)^2\Id,
 \end{align*}
so $T_\theta^*T_\theta-T_\theta T^*_\theta=\theta^2\left(T^*T-TT^*\right)$, and the result follows.
\end{proof}

Normal mappings can be decomposed using their eigenvalues. More specifically, if $T$ is normal and we denote by $\sigma(T)$ the set of its eigenvalues, then by \cite[Equation (7.3.6) and (7.5.2)]{matrixSIAM}, we have that
\[T=\sum_{\lambda\in\sigma(T)}\lambda P_{\ker(T-\lambda \Id)}.\]
Furthermore, the set of eigenvalues of the $\theta$-relaxation are
\begin{equation}\label{lambda theta}
\sigma(T_\theta)=\{\lambda_\theta:=\theta\lambda+1-\theta\mid\lambda\in\sigma(T)\},
\end{equation}
and we have the following decomposition of the iterated map $T_\theta^k:=T_\theta\circ\cdots\circ T_\theta$, defined as $k$ times the composition of $T_\Theta$,
\begin{equation}\label{spectral T theta}
  T_\theta^k=\sum_{\lambda\in\sigma(T)}(\theta\lambda+1-\theta)^nP_{\ker(T-\lambda \Id)},
\end{equation}
see, e.g.,~\cite[Theorem~14.9.10]{Garcia2023} for details. Using this decomposition, we can prove the following result.

\begin{proposition}\label{convex}
  If $T$ is normal, then $\theta\mapsto\norm{T^k_\theta x}$ is convex for all $x\in\E$ and $k\in\mathbb{N}$.
\end{proposition}

\begin{proof}
	Notice that for any $x\in\mathbb{E}$, we can express $x=\sum_{\lambda\in\sigma(T)}x_\lambda$ with $x_\lambda\in \ker(T-\lambda \Id)$. Without loss of generality, we write $\sigma(T)=\{\lambda_i\}_{i=1}^t$. By \cref{spectral T theta}, we get that,
  \[\norm{T_\theta^kx}=\norm{\sum_{i=1}^t(\theta\lambda_i+1-\theta)^kx_{\lambda_i}}=\sqrt{\sum_{i=1}^t|\theta\lambda_i+1-\theta|^{2k}\norm{x_{\lambda_i}}^2}=\norm{g(\theta)}^k_{2k},\]
  where $g(\theta):=\big(|\theta\lambda_i+1-\theta|\norm{x_{\lambda_i}}^{1/k}\big)_{i=1}^t$ and $\|\cdot\|_{2k}$ denotes the $2k$-norm. By~\cite[Example~3.14]{Boyd2004}), the function $\norm{g(\theta)}_{2k}$ is convex, and so is $\norm{T_\theta^kx}=\norm{g(\theta)}^k_{2k}$.
\end{proof}

\begin{example}[Necessity of normality in \cref{convex}]\label{not normal}
  Let $\E$ be $\R^2$ equipped with the standard inner product and consider
	\[T=\begin{bmatrix}
		0 & 1 \\ 0 & 0
\end{bmatrix}\quad \text{and}\quad  x=(0,1).\]
Then, we compute $T_\theta^2x=\theta^2T^2x+2\theta(1-\theta)Tx+(1-\theta)^2x=\big(2\theta(1-\theta),(1-\theta)^2\big)$. Hence, defining $f(\theta):=\norm{T^2_\theta x}=\sqrt{4\theta^2(1-\theta)^2+(1-\theta)^4}$, one clearly sees that $f$ is not convex (see \cref{necessity normality}). Indeed, $f(1/2)=\sqrt{5}/4>1/2=(f(0)+f(1))/2$.

\begin{figure}[h]
	\centering
\includegraphics[scale=0.95]{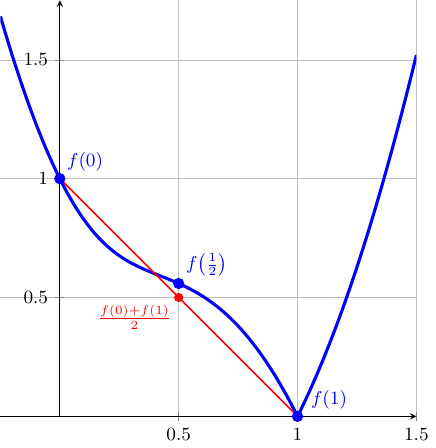}
\caption{Graph of the nonconvex function $f(\theta)=\norm{T^2_\theta x}$ (blue) given in \cref{not normal}}
\label{necessity normality}
\end{figure}
\end{example}

In addition to providing favorable properties for relaxed maps, normality facilitates the study of the linear convergence rate of the sequence $(T^k)_{k\in\N}$. Indeed, suppose that $T^k$ converges to $T_\star$, i.e., $\norm{T^k-T_\star}\to0$, where $\norm{\cdot}$ denotes the \emph{operator norm}. If $T$ is normal, by \cite[Corollary~2.7(ii),Theorem~2.17]{relaxDR}, $T^k$ converges linearly to $T_\star=P_{\Fix T}$ with optimal rate
\begin{equation*}
  \rho_1(T):=\max\left\{|\lambda|\mid\lambda\in\sigma(T)\setminus\{1\}\cup\{0\}\right\}\in(0,1),
\end{equation*}
that is, there is some $M>0$ such that $\norm{T^k-T_\star}\leq M\rho_1(T)^k$. The constant $\rho_1(T)$ is called the \emph{subdominant radius} of $T$.  The same arguments can be applied to $T_\theta$, in virtue of \cref{Ttheta normal}, and $T^k_\theta\to P_{\Fix T}$ with optimal linear rate $\rho_1(T_\theta)$. In general, however, there is no simple way of computing the subdominant radius of $T_\theta$ in terms of $\rho_1(T)$. We will show that this is not the case for the following class of linear maps.

\begin{definition}
  A linear map $T:\E\to\E$ is \emph{iso-averaged} if $2T^*T=T+T^*$.
\end{definition}

It was shown in \cite[Proposition~3.5(iv)]{cosineDR} that the Douglas--Rachford splitting operator is iso-averaged, without giving a name to this property. The next result shows that every iso-averaged map $T:\E\to\E$ is normal and can be written as $T=(S+\Id)/2$, where $S:\E\to\E$ is an \emph{isometry} (i.e., $S^*S=\Id$), thus motivating our name. In addition, any iso-averaged map is \emph{firmly nonexpansive}, meaning that $2T-\Id$ is nonexpansive (see~\cite[Proposition~4.4]{bauschke}).

\begin{proposition}\label{prop:iso-normal}
  A linear map $T:\E\to\E$  is iso-averaged if and only if $2T-\Id$ is an isometry. Thus, every iso-averaged map is normal and firmly nonexpansive.
\end{proposition}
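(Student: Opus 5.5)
The plan is to set $S:=2T-\Id$ and expand $S^*S$ directly. Since $S^*=2T^*-\Id$, we get
\[
S^*S=(2T^*-\Id)(2T-\Id)=4T^*T-2T^*-2T+\Id .
\]
Hence $S^*S=\Id$ holds if and only if $4T^*T=2(T+T^*)$, that is, $2T^*T=T+T^*$. This is exactly the definition of iso-averagedness, so the equivalence follows at once.

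For normality, the first step is to note that $\E$ is finite-dimensional. An isometry $S$ is therefore injective, hence bijective, and $S^*S=\Id$ gives $S^*=S^{-1}$. It follows that $SS^*=\Id$ as well, so $S$ is normal. Next, $T=\tfrac12 S+\tfrac12\Id$ is precisely the relaxation $S_{1/2}$ of $S$ in the notation of \cref{Ttheta normal}. By that lemma, every relaxation of a normal map is normal, so $T$ is normal. Alternatively, the same expansion as above gives $TT^*=\tfrac14(SS^*+S+S^*+\Id)$ and $T^*T=\tfrac14(S^*S+S+S^*+\Id)$, and these agree.

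Firm nonexpansiveness is then immediate. The map $2T-\Id=S$ satisfies $\norm{Sx}^2=\langle S^*Sx,x\rangle=\norm{x}^2$, so it is nonexpansive. By the characterization recalled before the statement (\cite[Proposition~4.4]{bauschke}), $T$ is firmly nonexpansive.

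The only point requiring care is the passage from $S^*S=\Id$ to $SS^*=\Id$. This step relies on finite dimensionality of the Euclidean space $\E$, which guarantees that a left inverse is also a right inverse. Everything else is direct algebra.
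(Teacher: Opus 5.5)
Your proof is correct and follows the paper's route: both set $S=2T-\Id$, reduce $2T^*T=T+T^*$ to $S^*S=\Id$, use finite dimensionality to turn the left inverse $S^*$ into a two-sided inverse so that $S$ (and hence $T=S_{1/2}$) is normal, and deduce firm nonexpansiveness from the nonexpansiveness of $S$. Your expansion of $S^*S$ in terms of $T$ is slightly cleaner than the paper's displayed identity, whose left-hand coefficient should read $\frac12$ (corresponding to $2T^*T$) rather than $\frac14$.
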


\begin{proof}
  Denoting by $S:=2T-\Id$, we obtain that $T=(S+\Id)/2$. Hence, $T$ is iso-averaged if and only if $\frac14(S^*+\Id)(S+\Id)=\frac12(S^*+S+2\Id)$. Rearranging the terms of this equation yields the desired equality $S^*S=\Id$. To show that $T$ is normal, notice that this is equivalent to prove that $S$ is normal. The condition $S^*S=\Id$ implies that $S^*$ is a left-inverse of $S$. Since $\E$ is finite dimensional, this also implies that $S^*$ is a right-inverse (and thus, $S$ is orthogonal). In particular, $I=S^*S=SS^*$ and $S$ is normal. Finally, since $S$ is orthogonal, we have $\norm{S}=1$ and the firm nonexpansiveness of $T$ is due to \cite[Corollary~4.5]{bauschke}.
\end{proof}

In light of \cref{prop:iso-normal}, it is natural to ask whether iso-averaged maps are always normal in Hilbert spaces. The next example demonstrates that this property fails to hold in the infinite-dimensional setting.

\begin{example}[Iso-averaged maps might not be normal in Hilbert spaces]
   Let $\Hi=\ell^2(\mathbb{N})$ and consider the right-shifted map $R:\Hi\to\Hi$ defined as $R(x_1,x_2,\ldots):=(0,x_1,x_2,\ldots)$. Then $R$ is an isometry which is not normal, so $T:=(S+\Id)/2$ is iso-averaged but not normal. Indeed, one checks that $R^*=L$, the left-shifted map, defined as $L(x_1,x_2,\ldots):=(x_2,x_3,\ldots)$. Thus,
  \[R^*Rx=LR(x_1,x_2,\ldots)=L(0,x_1,x_2,\ldots)=(x_1,x_2,\ldots)=x,\]
  so $R^*R=\Id$. On the other hand,
  \[RR^*x=RL(x_1,x_2,\ldots)=R(x_2,x_3,\ldots)=(0,x_2,x_3\ldots)\neq x.\]
  Since $R^*R\neq RR^*$, it follows that $R$ is not normal, and consequently, neither is $T$.
\end{example}

The next example shows that the relaxation of an iso-averaged map need not have this property. At the same time, it provides a simple example of a normal map that is not iso-averaged.

\begin{example}[The relaxed iso-averaged map might not be iso-averaged]\label{relaxed iso-averaged}
Let $\E$ be $\R^2$ equipped with the standard inner product and consider the map
\[T=\begin{bmatrix}
1 & 0 \\ 0 & 0
\end{bmatrix}.\]
One can readily check that $T$ is iso-averaged. For any $\theta\in\mathbb{R}$, we have $T_\theta=\Diag(1,1-\theta)$, where $\Diag$ denotes the diagonal matrix. Observe that $T_\theta$ is normal for all $\theta\in\R$, but is iso-averaged if and only if $\theta\in\{0,1\}$.
\end{example}

If $T$ is iso-averaged, then for all eigenvalue $\lambda\in\sigma(T)$, it holds that
\begin{equation}
  2\overline{\lambda}\lambda=\lambda+\overline{\lambda}\Leftrightarrow |\lambda|^2=\operatorname{Re}(\lambda),\label{isoav lambda}
\end{equation}
where $\overline{\lambda}$ denotes the complex conjugate of $\lambda$ (recall that $\sigma(T^*)=\left\{\overline{\lambda}\mid \lambda\in\sigma(T)\right\}$).
This means that all eigenvalues $\lambda$ lie in the circle of radius $1/2$ centered at $1/2+0i$. Hence, $|\lambda|\leq1$ and $\lambda=1$ is the only eigenvalue on the unit circle. By \cite[Fact 2.4]{relaxDR}, we deduce that $T^k$ converges to some $T_\star$, and by the previous discussion, it converges linearly to $P_{\Fix T}$ with optimal rate $\rho_1(T)$.

\begin{theorem}\label{symmetry}
  Let $T:\E\to\E$ be a linear map. The following are equivalent:
  \begin{enumerate}
    \item\label{symmetry1} $T$ is iso-averaged.
    \item\label{symmetry2} The function $\theta\mapsto\norm{T_\theta x}$ is symmetric with respect to $\theta=1$ for all $x\in\E$.
    \item \label{function} The function $\theta\mapsto\norm{T^k_\theta x}$ is symmetric with respect to $\theta=1$ for all $x\in\E$ and $k\in \N$.
 \end{enumerate}
 Moreover, if one of these conditions holds, for any $\theta\in(0,2)$, the iterated map $T^k_\theta\to P_{\Fix T}$ with optimal linear rate $\rho_1(T_\theta)$ satisfying
\begin{equation}\label{rate T theta}
  \rho_1(T)<\rho_1(T_\theta)=\sqrt{\theta(2-\theta)\rho_1(T)^2+(1-\theta)^2}<1,\quad\text{whenever }\theta\neq 1.
\end{equation}
 Furthermore, the sequence $(\norm{T^k_\theta x})_{k\in\mathbb{N}}$ is strictly decreasing for all $x\notin\Fix T$ if $\theta\in(0,2)\setminus\{1\}$, and for all $x\notin\Fix T\oplus \ker T$ if $\theta=1$.
\end{theorem}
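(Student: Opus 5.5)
I will prove the cycle (i)$\Rightarrow$(iii)$\Rightarrow$(ii)$\Rightarrow$(i). The implication (iii)$\Rightarrow$(ii) is the case $k=1$.

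\textbf{(ii)$\Rightarrow$(i).} Expanding as in the proof of \cref{Ttheta normal} gives
\[
\norm{T_\theta x}^2=\theta^2\norm{Tx}^2+\theta(1-\theta)\langle (T+T^*)x,x\rangle+(1-\theta)^2\norm{x}^2 .
\]
This is a quadratic polynomial in $\theta$. Symmetry about $\theta=1$ means that $\norm{T_{2}x}=\norm{T_0x}=\norm{x}$ (taking $\theta=2$ and $\theta=0$), and more generally that the coefficients match under $\theta\mapsto 2-\theta$. Substituting $\theta=2$ gives $4\norm{Tx}^2-2\langle (T+T^*)x,x\rangle=0$ for all $x$. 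Hence $\langle (2T^*T-T-T^*)x,x\rangle=0$ for all $x$. Since $2T^*T-T-T^*$ is self-adjoint, it vanishes, so $T$ is iso-averaged. In the real case this polarization step needs self-adjointness, which holds here.

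\textbf{(i)$\Rightarrow$(iii).} By \cref{prop:iso-normal}, $S:=2T-\Id$ is orthogonal and $T$ is normal. Write $T_\theta=\tfrac{1}{2}\bigl(\theta S+(2-\theta)\Id\bigr)$, so that $T_{2-\theta}=\tfrac12\bigl((2-\theta)S+\theta\Id\bigr)=S\,T_\theta^*$ up to adjoint. More cleanly, $T_{2-\theta}=S\,(T_\theta)^*$ because $S(\theta S^*+(2-\theta)\Id)=\theta\Id+(2-\theta)S$. All of these maps are polynomials in $S$ and $S^*$, and these commute, so
\[
T_{2-\theta}^k=S^k(T_\theta^*)^k .
\]
Then $\norm{T_{2-\theta}^kx}=\norm{(T_\theta^*)^kx}=\norm{(T_\theta^k)^*x}$, which equals $\norm{T_\theta^kx}$ because $T_\theta^k$ is normal by \cref{Ttheta normal}. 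Normal operators satisfy $\norm{Ax}=\norm{A^*x}$.

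\textbf{Rate.} By \cref{Ttheta normal} and the discussion before the theorem, $T_\theta$ is normal. Its optimal rate is the largest $|\lambda_\theta|$ over $\lambda\in\sigma(T)\setminus\{1\}$, where $\lambda_\theta$ is as in \cref{lambda theta}. Using \cref{isoav lambda}, i.e. $|\lambda|^2=\operatorname{Re}\lambda$, I compute
\[
|\theta\lambda+1-\theta|^2=\theta^2|\lambda|^2+2\theta(1-\theta)\operatorname{Re}\lambda+(1-\theta)^2=\theta(2-\theta)|\lambda|^2+(1-\theta)^2 .
\]
For $\theta\in(0,2)$ this is increasing in $|\lambda|$, so the maximum is attained at $|\lambda|=\rho_1(T)$. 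The value $\lambda=0$ is included consistently with the convention in $\rho_1$. This gives the formula in \cref{rate T theta}.

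The right-hand side is a convex combination of $\rho_1(T)^2$ and $1$ with weights $\theta(2-\theta)$ and $(1-\theta)^2$, which sum to $1$. Since $\rho_1(T)<1$ (eigenvalues other than $1$ lie strictly inside the unit disc), both strict inequalities follow when $\theta\neq1$. The main obstacle is the edge case $\rho_1(T)=0$ together with the claim $\rho_1\in(0,1)$. I would treat $\sigma(T)\setminus\{1\}\cup\{0\}$ carefully: when $T$ is an orthogonal projection, $\rho_1(T_\theta)=|1-\theta|$ still satisfies the formula.

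\textbf{Monotonicity.} Decompose $x=\sum_\lambda x_\lambda$ into eigencomponents, as in the proof of \cref{convex}. Then
\[
\norm{T_\theta^{k+1}x}^2=\sum_\lambda|\lambda_\theta|^{2k+2}\norm{x_\lambda}^2\le\sum_\lambda|\lambda_\theta|^{2k}\norm{x_\lambda}^2 .
\]
The inequality is strict exactly when some component with $0<|\lambda_\theta|<1$ is nonzero. For $\theta\in(0,2)\setminus\{1\}$, every $\lambda\neq1$ has $0<|\lambda_\theta|<1$ by the formula above, since $(1-\theta)^2>0$. So strictness holds whenever $x\notin\Fix T$, and it persists for all $k$ because $T_\theta$ is invertible on those components.

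For $\theta=1$, the components with $\lambda=0$ are killed after one step. Strict decrease for all $k$ therefore requires a nonzero component with $\lambda\notin\{0,1\}$, i.e. $x\notin\Fix T\oplus\ker T$. Here orthogonality of the eigenspaces comes from normality.
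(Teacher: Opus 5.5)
Your proof is correct. Its skeleton is the paper's: the quadratic expansion in $\theta$, normality to handle powers, and the eigenvalue identity $|\lambda_\theta|^2=\theta(2-\theta)|\lambda|^2+(1-\theta)^2$. Two steps take a genuinely different route.

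\emph{Powers, (i)$\Rightarrow$(iii).} The paper proves one identity, $T_{2-\theta}^*T_{2-\theta}-T_\theta^*T_\theta=2(1-\theta)(2T^*T-T-T^*)$. This gives (i)$\Leftrightarrow$(ii) in both directions at once; your (ii)$\Rightarrow$(i) is its case $\theta=0$, essentially \cref{prop:iso-normal}. The paper then uses normality to write $(T^k_{2-\theta})^*T^k_{2-\theta}=(T_{2-\theta}^*T_{2-\theta})^k=(T_\theta^*T_\theta)^k$. Your factorization $T_{2-\theta}=S\,T_\theta^*$ is more structural. The identity $T^k_{2-\theta}=S^k(T^k_\theta)^*$ exhibits the $(2-\theta)$-iterates as an isometric image of the adjoint $\theta$-iterates. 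That makes explicit why the iterates differ while their distances to $P_{\Fix T}x$ agree (cf.\ \cref{remark}).

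\emph{Strict monotonicity.} The paper writes $T_\theta=(1-\tfrac{\theta}{2})\Id+\tfrac{\theta}{2}S$ and uses the averaged-isometry identity $\norm{T_\theta z}^2=\norm{z}^2-\tfrac{\theta}{2}(1-\tfrac{\theta}{2})\norm{(S-\Id)z}^2$. This reduces strictness to $T^k_\theta x\notin\Fix T$, which it then settles via $\ker T_\theta$ and the eigenvalue $1-1/\theta$. Your spectral expansion gives the decrement $\sum_\lambda|\lambda_\theta|^{2k}(1-|\lambda_\theta|^2)\norm{x_\lambda}^2$ in one line. It also makes the exceptional set $\Fix T\oplus\ker T$ at $\theta=1$ transparent. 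The price is reliance on the complexified eigendecomposition, which the paper already uses in \cref{convex}. Your convex-combination argument for the two strict inequalities in \cref{rate T theta} is also cleaner than the paper's convexity discussion.

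\emph{The edge case you flagged.} Whenever $\sigma(T)\neq\{1\}$, the adjoined $0$ in the definition of $\rho_1$ is harmless and your formula holds, orthogonal projections included. The one exception is $T=\Id$. There $\rho_1(T)=\rho_1(T_\theta)=0$, while the right-hand side of \cref{rate T theta} equals $|1-\theta|$. So both the formula and $\rho_1(T)<\rho_1(T_\theta)$ fail, and the rate claim needs $T\neq\Id$. The paper overlooks this as well; as you noticed, its assertion $\rho_1(T)\in(0,1)$ already fails for orthogonal projections.
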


\begin{proof}
  Let us first show the equivalence between \cref{symmetry1} and \cref{symmetry2}. Starting with \cref{symmetry2}, we need to show that $\norm{T_\theta x}=\norm{T_{2-\theta} x}$ for all $x\in\E$. This identity is equivalent to
  \[\langle T_\theta^*T_\theta x,x\rangle=\langle T_\theta x,T_\theta x\rangle=\norm{T_\theta  x}^2=\norm{T_{2-\theta}  x}^2=\langle T_{2-\theta}^*T_{2-\theta} x,x\rangle,\quad\forall x\in\E,\]
  which holds if and only if $T_\theta^*T_\theta =T_{2-\theta}^*T_{2-\theta} $.
  Computing both maps, we obtain
  \begin{align*}
    T_{2-\theta}^*T_{2-\theta}&=(2-\theta)^2T^*T+(2-\theta)(\theta-1)(T^*+T)+(\theta-1)^2\Id,\\
    T_\theta^*T_\theta&=\theta^2T^*T+\theta(1-\theta)(T^*+T)+(1-\theta)^2\Id.
  \end{align*}
  Subtracting them, we get
  \begin{equation*}
    T_{2-\theta}^*T_{2-\theta}-T_\theta^*T_\theta=\big((2-\theta)^2-\theta^2\big)T^*T+\big((2-\theta)(\theta-1)-\theta(1-\theta)\big)(T+T^*)
  \end{equation*}
  Computing $(2-\theta)^2-\theta^2=4(1-\theta)$ and $(2-\theta)(\theta-1)-\theta(1-\theta)=2(\theta-1)$, we simplify
    \begin{equation}\label{symmetry isoav}
    T_{2-\theta}^*T_{2-\theta}-T_\theta^*T_\theta=2(1-\theta)\big(2T^*T-T-T^*\big).
  \end{equation}
  To sum up, \cref{symmetry2} is equivalent to say that \cref{symmetry isoav} is zero, which is the same as \cref{symmetry1}. To see the equivalence of \cref{function}, it is enough to observe that, under normality of $T$, we get
\begin{equation*}
  (T_{2-\theta}^k)^*T_{2-\theta}^k=(T_{2-\theta}^*)^kT_{2-\theta}^k=(T_{2-\theta}^*T_{2-\theta})^k=(T_\theta^*T_\theta)^k=(T_\theta^k)^*T_\theta^k,
\end{equation*}
which proves the statement.

Now, since $T_\theta$ is normal, to obtain the rate of linear convergence of the sequence $(T^k_\theta)_{k\in\N}$ we only need to compute $\rho_1(T_\theta)$. Combining \cref{lambda theta,isoav lambda}, we deduce that the modulus of $\lambda_\theta\in\sigma(T_\theta)$ satisfies
  \begin{align*}
    |\lambda_\theta|^2=|\theta\lambda+1-\theta|^2=\theta^2|\lambda|^2+\theta(1-\theta)(\lambda+\overline\lambda)+(1-\theta)^2=\left(\theta^2+2\theta(1-\theta)\right)|\lambda|^2+(1-\theta)^2.
  \end{align*}
  Therefore, for all $\theta\in (0,2)$, we obtain
  \begin{align}
    |\lambda_\theta|&=\sqrt{\theta(2-\theta)|\lambda|^2+(1-\theta)^2}=\left\|\left(\sqrt{\theta(2-\theta)}|\lambda|,1-\theta\right)\right\| \label{lambda theta function1}\\
    &=\sqrt{|\lambda|^2+\left(\sqrt{1-|\lambda|^2}(1-\theta)\right)^2}=\left\|\left(|\lambda|^2,\sqrt{1-|\lambda|^2}(1-\theta)\right)\right\|.\label{lambda theta function2}
  \end{align}
  Then, by~\cref{lambda theta function2}, the function $\theta\mapsto |\lambda_\theta|$ is convex (recall again \cite[Example~3.14]{Boyd2004}). In fact, it is strictly convex because the first component of the vector in~\cref{lambda theta function2} is independent of $\theta$. Further, it is symmetric with respect to the vertical axis $\theta=1$, and has a minimum at $\theta=1$ with value $|\lambda_1|=|\lambda|$. The graphs of $|\lambda_\theta|$ for some chosen $|\lambda|$ are illustrated in \cref{rho1}.

\begin{figure}[ht]
\centering
\includegraphics{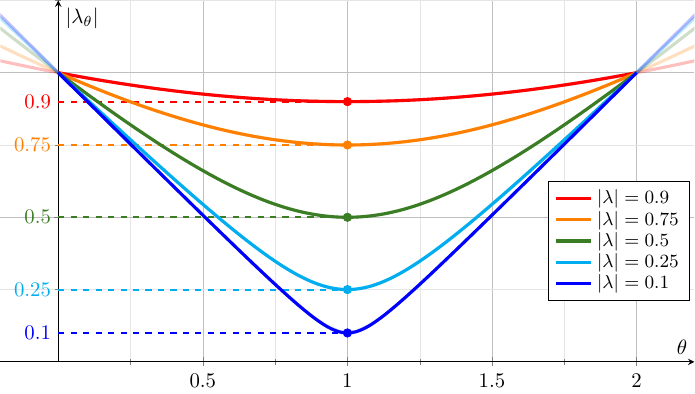}
\caption{Graphs of $|\lambda_\theta|$ for $|\lambda|\in\{0.1,0.25,0.5,0.75,0.9\}$}
\label{rho1}
\end{figure}
For any fixed value of $\theta\in(0,2)$, we deduce from~\cref{lambda theta function1} that the function $|\lambda|\mapsto |\lambda_\theta|$ is increasing. Putting all this together and recalling the definition of the subdominant radius, we obtain~\cref{rate T theta}.

Lastly, let us check that $\|T^{k+1}_\theta x\|<\|T^k_\theta x\|$. Recall that by \cref{prop:iso-normal}, $T=(S+\Id)/2$, where $S$ is an isometry. This implies that $T_\theta=S_{\theta/2}$ and $\norm{Sz}=\norm{z}$ for all $z\in\E$. Therefore, by \cite[Corollary~2.15]{bauschke}, we write
\begin{equation*}
    \norm{T^{k+1}_\theta x}^2=\norm{S_{\theta/2}(T^k_\theta x)}^2=\norm{T^k_\theta x}^2-\frac{\theta}2\bigg(1-\frac{\theta}2\bigg)\norm{(S-\Id)T^k_\theta x}^2.
\end{equation*}
Thus, the sequence is strictly decreasing if and only if $-\frac{\theta}2(1-\theta/2)\norm{(S-\Id)T^k_\theta x}^2<0$. This requires $\theta\in(0,2)$. The condition $\norm{(S-\Id)T^k_\theta x}^2=0$ is equivalent to say that $$T^k_\theta x\in \ker (S-\Id)=\Fix S=\Fix T.$$
If we decompose $x=x_F+x^\perp\in\Fix T\oplus(\Fix T)^\perp$, then $T^k_\theta x=x_F+T^k_\theta x^\perp$. Then, the condition $T^k_\theta x\in\Fix T$ holds if and only if $x^\perp\in\ker T^k_\theta=\ker T_\theta$, which is equivalent to say that $x^\perp$ is an eigenvector of $T$ with eigenvalue $\lambda:=1-1/\theta$. We distinguish two cases: (i) If $\theta\neq 1$, then $\lambda$ does not satisfy \cref{isoav lambda}, so $x^\perp = 0$ and thus, $x\in\Fix T$; (ii) If $\theta=1$, then $T_\theta=T$, and we deduce that $x\in\Fix T\oplus\ker T$.
\end{proof}

\begin{remark}\label{remark}
\begin{enumerate}[wide, labelwidth=0pt, labelindent=0pt]
      \setlength{\itemsep}{0pt}
  \setlength{\parskip}{0pt}
  \setlength{\parsep}{0pt}
    \item\label{remark1} Observe that~\cref{rate T theta} implies that for any relaxation parameter $\theta\in(0,2)\setminus\{1\}$, the relaxed map $T_\theta$ has a strictly slower rate of convergence than $T$.
    \item\label{remark2} For any $x\in\E$ and any $k\in\N$, one has that $\|T_\theta^{k}x-P_{\Fix T}x\|=\|T_\theta^{k}\overline{x}\|$, where $\overline{x}:=P_{(\Fix T)^\perp}x$. The convexity (\cref{convex}) and symmetry (\cref{symmetry}) of the function $\theta\mapsto\norm{T^k_\theta x}$ implies that the optimal relaxation parameter is $\theta=1$ (meaning that, among all parameters $\theta\in (0,2)$, the value $\theta=1$ provides the minimum distance of the iterates to the limit point $P_{\Fix T}x$). Moreover, for any $\theta\in(0,2)$, one has $\|T_\theta^{k}x-P_{\Fix T}x\|=\|T_{2-\theta}^{k}x-P_{\Fix T}x\|$, so even though the iterates $T_\theta^{k}x$ and $T_{2-\theta}^{k}x$ will be generally different, their distance to the limit point will be exactly the same (an illustrative example is provided in the next section, see~\cref{fig:geom}).
    \item\label{remark3} Since the Douglas--Rachford map is iso-averaged~\cite[Proposition~3.5 (iv)]{cosineDR}, \cref{symmetry} extends~\cite[Theorem~3.10]{relaxDR}.
  \end{enumerate}
\end{remark}

In the following section, we discuss the relationship between the iso-averagedness of the maps defined by the family of graph-based splitting algorithms and the property that the graph and subgraph defining the algorithms coincide, thereby answering the first open question in~\cite{graphnumerical}.

\section{Graph-based splitting algorithms applied to subspaces}\label{sec:graph}

Let $\mathcal{N}:=\{1,\ldots,n\}$ be a set of \emph{nodes} and let $\Ei\subseteq\mathcal{N}\times\mathcal{N}$ be a set of \emph{edges}. An \emph{algorithmic graph} $G=(\mathcal{N},\Ei)$ is a connected directed graph such that $(i,j)\in\Ei\Rightarrow i<j$. We define the \emph{adjacency matrix} $\operatorname{Adj}(G)$ as the $n\times n$ matrix such that $\operatorname{Adj}(G)_{i,j}=1$ if and only if $(i,j)\in\Ei$ and zero otherwise. The \emph{degree} of a node $i$ is $d_i:=|\{j\in\mathcal{N}\mid (i,j)\in\Ei\text{ or }(j,i)\in\Ei\}|$, where $|\cdot|$ denotes the cardinality of a set, and the \emph{degree matrix} is the diagonal $n\times n$ matrix $\operatorname{Deg}(G):=\Diag(d_1,\ldots,d_n)$.

We assume that $G=(\mathcal{N},\Ei)$ is an algorithmic graph and $G'\subseteq G$ is a  connected spanning subgraph (i.e., $G'$ uses all nodes).
We recall that the \emph{Laplacian matrix} of $G'$ is defined as $\operatorname{Lap}(G'):=\operatorname{Deg}(G')-\operatorname{Adj}(G')-\operatorname{Adj}(G')^T$. It is an $n\times n$ matrix with rank $n-1$ and $\ker(\Lap(G'))=\spa\{\mathbf{1}_n\}$, where $\mathbf{1}_n=(1,\ldots,1)\in\R^n$, and there exists a full-rank $n\times (n-1)$ matrix $Z$ such that $\operatorname{Lap}(G')=ZZ^T$ (see~\cite[Proposition~2.16]{graph-fb} for details).

Let $U_1,\ldots,U_n$ be linear subspaces of $\E$. The family of graph-based splitting methods introduced in~\cite{graph-drs}, when applied to the normal cones of these subspaces, can be described by the map $\mathcal{T}:\E^{n-1}\to\E^{n-1}$, given for $\bm{v}\in\E^{n-1}$ by
\begin{equation}\label{variable x}
    \mathcal{T}(\bm{v})=\bm{v}-\ot{Z^T}\bm{x},\text{ where }x_i:=P_{U_i}\left(\frac{2}{d_i}\sum_{(h,i)\in\Ei}x_h+\frac{1}{d_i}(\ot{Z}\bm{v})_i\right),\;i=1,\ldots, n,
\end{equation}
see also \cite[Proposition~3.5]{graph-linear}, where we emphasize that $d_i=\operatorname{Deg}(G)_{i,i}$ and $\operatorname{Lap}(G')=ZZ^T$. Here $\ot{Z}:=Z\otimes\Id:\E^{n-1}\to\E^{n}$, which is defined for any $\bm{v}\in\E^{n-1}$ as
\begin{equation*}
  \ot{Z}\bm{v}:=\left(\sum_{j=1}^{n-1}Z_{1,j}v_j,\ldots,\sum_{j=1}^{n-1}Z_{n,j}v_j\right),
\end{equation*}
so the equality ${(\ot{Z})}^* = \ot{Z^T}$ holds.  Also, if we denote by $\Delta:=\{(x,\ldots,x)\mid x\in\E\}\subseteq\E^n$ the \emph{diagonal subspace}, by \cite[Proposition 2.18]{graph-linear}, the range of $\ot{Z}$ is
\begin{equation}\label{diagonal}
  \ran(\ot{Z})=\Delta^\perp.
\end{equation}

To fully describe $\mathcal{T}$ as a composition of maps, we first represent $\bm{x}$ as the image of $\bm{v}$ under a suitable map. By \cite[Equation~(3.11)]{graph-drs} the variable $\bm{x}$ from \cref{variable x}
can be rearranged as follows. Let $\mathcal{U}:=U_1\times\cdots\times U_n$ and $\oDelta:=\operatorname{Deg}(G)-2\operatorname{Adj}(G)^T$. Then $\bm{x}\in\mathcal{U}$ and
\begin{equation}\label{Zv=Dx}
	\quad \ot{Z}v-\ot{\oDelta}\bm{x}\in\mathcal{U}^\perp,
\end{equation}
which is equivalent to say that
\begin{equation}\label{PUZDelta}
  P_\mathcal{U}(\ot{Z}\bm{v})=P_\mathcal{U}(\ot{\oDelta}\bm{x}).
\end{equation}
If we denote by $\mathcal{M}_\oDelta:=P_\mathcal{U}\ot{\oDelta} P_\mathcal{U}+P_{\mathcal{U}^\perp}:\E^n\to\E^n$, then \cref{PUZDelta} is equivalent to
\begin{equation}\label{eq:PUZv}
P_\mathcal{U}(\ot{Z}\bm{v})=\mathcal{M}_\oDelta\bm{x},
\end{equation}
since $\bm{x}\in\mathcal{U}$. The following lemma proves that, among other properties, $\mathcal{M}_\oDelta$ is invertible on $\mathcal{U}$, which allows us to solve \cref{PUZDelta} for the variable $\bm{x}$.

\begin{lemma}\label{M}
The linear map $\mathcal{M}_\oDelta:\E^n\to\E^n$ is invertible, and the following assertions hold:
\begin{enumerate}
	\item ${(\mathcal{M}_\oDelta)}^*=\mathcal{M}_{\oDelta^T}$.\label{M1}
	\item $\mathcal{M}_\oDelta^{-1}$ and $P_\mathcal{U}$ commute.\label{M2}
	\item $\mathcal{M}_\oDelta^{-1}P_\mathcal{U}\ot{\oDelta}P_\mathcal{U}=P_\mathcal{U}\ot{\oDelta}P_\mathcal{U}\mathcal{M}_\oDelta^{-1}=P_\mathcal{U}$.\label{M3}
\end{enumerate}
\end{lemma}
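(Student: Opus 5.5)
We will work with the block decomposition $\E^n=\mathcal{U}\oplus\mathcal{U}^\perp$. With respect to this decomposition, $\mathcal{M}_\oDelta$ acts as $A:=P_\mathcal{U}\ot{\oDelta}P_\mathcal{U}$ restricted to $\mathcal{U}$ (mapping $\mathcal{U}\to\mathcal{U}$) and as the identity on $\mathcal{U}^\perp$. It maps $\mathcal{U}$ into $\mathcal{U}$ and $\mathcal{U}^\perp$ onto itself, and there are no off-diagonal blocks. Invertibility of $\mathcal{M}_\oDelta$ therefore reduces to injectivity of $A$ on $\mathcal{U}$, since $\E^n$ is finite dimensional.

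\textbf{Key step (the main obstacle): invertibility of $A$ on $\mathcal{U}$.} We show $\langle \ot{\oDelta}\bm{x},\bm{x}\rangle>0$ for every $\bm{x}\neq0$ in $\E^n$. It suffices to show that the symmetric part $\frac12(\oDelta+\oDelta^T)=\operatorname{Deg}(G)-\operatorname{Adj}(G)-\operatorname{Adj}(G)^T=\Lap(G)$ is positive definite; but $\Lap(G)$ is only positive semidefinite, with kernel $\spa\{\mathbf{1}_n\}$. So instead we use that $\oDelta$ is lower triangular: $\operatorname{Adj}(G)^T$ is strictly lower triangular because $(i,j)\in\Ei\Rightarrow i<j$, so $\oDelta$ has diagonal $\operatorname{Deg}(G)$ with positive entries ($G$ connected). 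We argue injectivity directly. If $\bm{x}\in\mathcal{U}$ and $P_\mathcal{U}\ot{\oDelta}\bm{x}=0$, then componentwise $P_{U_i}\bigl(d_ix_i-2\sum_{(h,i)\in\Ei}x_h\bigr)=0$. Proceeding by induction on $i$ gives $d_ix_i=P_{U_i}(d_ix_i)=2P_{U_i}\sum_{h<i}x_h=0$, using $x_i\in U_i$ and, for the induction, that $x_h=0$ for $h<i$. Hence $\bm{x}=0$. This is the same forward-substitution structure that makes \cref{variable x} well defined.

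\textbf{Assertion (i).} We use $(\ot{\oDelta})^*=\ot{\oDelta^T}$ together with the self-adjointness of $P_\mathcal{U}$ and $P_{\mathcal{U}^\perp}$, which gives $\mathcal{M}_\oDelta^*=P_\mathcal{U}\ot{\oDelta^T}P_\mathcal{U}+P_{\mathcal{U}^\perp}=\mathcal{M}_{\oDelta^T}$.

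\textbf{Assertion (ii).} We have $P_\mathcal{U}\mathcal{M}_\oDelta=P_\mathcal{U}\ot{\oDelta}P_\mathcal{U}=\mathcal{M}_\oDelta P_\mathcal{U}$, since $P_\mathcal{U}P_{\mathcal{U}^\perp}=0=P_{\mathcal{U}^\perp}P_\mathcal{U}$. Multiplying on the left and on the right by $\mathcal{M}_\oDelta^{-1}$ yields $\mathcal{M}_\oDelta^{-1}P_\mathcal{U}=P_\mathcal{U}\mathcal{M}_\oDelta^{-1}$.

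\textbf{Assertion (iii).} Write $P_\mathcal{U}\ot{\oDelta}P_\mathcal{U}=\mathcal{M}_\oDelta P_\mathcal{U}=P_\mathcal{U}\mathcal{M}_\oDelta$. Then
\[\mathcal{M}_\oDelta^{-1}P_\mathcal{U}\ot{\oDelta}P_\mathcal{U}=\mathcal{M}_\oDelta^{-1}\mathcal{M}_\oDelta P_\mathcal{U}=P_\mathcal{U},\qquad P_\mathcal{U}\ot{\oDelta}P_\mathcal{U}\mathcal{M}_\oDelta^{-1}=P_\mathcal{U}\mathcal{M}_\oDelta\mathcal{M}_\oDelta^{-1}=P_\mathcal{U}.\]
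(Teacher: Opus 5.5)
Your proof is correct and follows the paper's argument. Invertibility of $\mathcal{M}_\oDelta$ reduces to its restriction $P_\mathcal{U}\ot{\oDelta}$ on $\mathcal{U}$, which forward substitution handles because $\oDelta$ is lower triangular with positive diagonal, and (i)--(iii) follow from self-adjointness of the projections together with $P_\mathcal{U}\mathcal{M}_\oDelta=P_\mathcal{U}\ot{\oDelta}P_\mathcal{U}=\mathcal{M}_\oDelta P_\mathcal{U}$. You should delete the opening claim of your key step, $\langle\ot{\oDelta}\bm{x},\bm{x}\rangle>0$ for all $\bm{x}\neq0$: it is false, since $\langle\ot{\oDelta}\bm{x},\bm{x}\rangle=\langle\ot{\Lap(G)}\bm{x},\bm{x}\rangle$ vanishes on the diagonal $\Delta$ (as you then note yourself), and it is not what your induction proves.
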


\begin{proof}
  Notice that $\mathcal{M}_\oDelta$ acts as the identity on $\mathcal{U}^\perp$. It suffices to show that the restriction of $\mathcal{M}_\oDelta$ to $\mathcal{U}$, given by $\mathcal{M}_\oDelta|_\mathcal{U}=P_\mathcal{U}\ot{\oDelta}$, is invertible on $\mathcal{U}$. Let $\bm{u}\in\mathcal{U}$ and $\widetilde{\bm{u}}:=P_\mathcal{U}\ot{\oDelta}\bm{u}$. Then, the components of $\widetilde{\bm{u}}$ are described as
  \begin{equation}\label{PUi}
    \widetilde{u}_i=P_{U_i}\bigg(d_iu_i-2\sum_{(h,i)\in\Ei}u_h\bigg)=d_iu_i-2\sum_{(h,i)\in\Ei}P_{U_i}(u_h).
  \end{equation}
  Since $d_i>0$ and $(h,i)\in\Ei$ implies $h<i$, we conclude by forward substitution that each component of $\bm{u}$ can be written as a linear combination of the components of $\widetilde{\bm{u}}$.

  We now verify the listed properties. First, \cref{M1} holds by the definition of $\mathcal{M}_\oDelta$ and the self-adjointness of the orthogonal projection. Next, to verify \cref{M2}, we observe that this condition is equivalent to $\mathcal{M}_\oDelta P_\mathcal{U}=P_\mathcal{U}\mathcal{M}_\oDelta$, which clearly holds. Finally, \cref{M3} follows from the identity $P_\mathcal{U}\ot{\oDelta} P_\mathcal{U} =\mathcal{M}_\oDelta P_\mathcal{U}$.
\end{proof}

Using  \cref{eq:PUZv} and \cref{M}, we can finally express the graph operator $\mathcal{T}$ given in \cref{variable x} as a composition of linear maps. Specifically,
\begin{equation}\label{eq:T}
\mathcal{T}=\Id-\oxi,\text{ with }\oxi:=\ot{Z^T}\mathcal{M}_\oDelta^{-1} P_\mathcal{U}\ot{Z},
\end{equation}
where $\Id$ denotes now the identity map in $\E^n$.

\begin{theorem}\label{G=G'}
    The linear map $\mathcal{T}:\E^{n-1}\to\E^{n-1}$ in~\cref{variable x} is iso-averaged for all linear subspaces $U_1,\ldots,U_n$ if and only if $G=G'$.
\end{theorem}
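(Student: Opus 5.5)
The plan is to reduce iso-averagedness of $\mathcal{T}$ to that of $\oxi$, then to a purely combinatorial identity involving the Laplacians of $G$ and $G'$. By~\cref{eq:T}, $\mathcal{T}=\Id-\oxi$. Expanding $2\mathcal{T}^*\mathcal{T}=\mathcal{T}+\mathcal{T}^*$ gives $2\Id-2\oxi-2\oxi^*+2\oxi^*\oxi=2\Id-\oxi-\oxi^*$. Hence $\mathcal{T}$ is iso-averaged if and only if $2\oxi^*\oxi=\oxi+\oxi^*$, i.e.\ if and only if $\oxi$ is iso-averaged. It therefore suffices to compute $2\oxi^*\oxi-\oxi-\oxi^*$ explicitly.

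Set $N:=\mathcal{M}_\oDelta^{-1}P_\mathcal{U}$. By \cref{M}\,\cref{M2} we have $N=P_\mathcal{U}\mathcal{M}_\oDelta^{-1}P_\mathcal{U}$, and by \cref{M}\,\cref{M1} we have $N^*=P_\mathcal{U}\mathcal{M}_{\oDelta^T}^{-1}P_\mathcal{U}$. Then $\oxi=\ot{Z^T}N\ot{Z}$ and $\oxi^*=\ot{Z^T}N^*\ot{Z}$. Using $\ot{Z}\ot{Z^T}=\ot{\Lap(G')}$, we get
\[
2\oxi^*\oxi-\oxi-\oxi^*=\ot{Z^T}\bigl(2N^*\ot{\Lap(G')}N-N-N^*\bigr)\ot{Z}.
\]
To rewrite $N+N^*$, apply \cref{M}\,\cref{M3} in the form $P_\mathcal{U}\ot{\oDelta}P_\mathcal{U}N=P_\mathcal{U}$. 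This gives $N^*\ot{\oDelta}N=N^*P_\mathcal{U}\ot{\oDelta}P_\mathcal{U}N=N^*P_\mathcal{U}=N^*$. Taking adjoints, $N^*\ot{\oDelta^T}N=N$. Now $\oDelta+\oDelta^T=2\operatorname{Deg}(G)-2\operatorname{Adj}(G)-2\operatorname{Adj}(G)^T=2\Lap(G)$, so $N+N^*=2N^*\ot{\Lap(G)}N$. Hence
\[
2\oxi^*\oxi-\oxi-\oxi^*=-2\,\ot{Z^T}N^*\ot{L_H}N\ot{Z},\qquad L_H:=\Lap(G)-\Lap(G'),
\]
where $L_H$ is the Laplacian of the graph $H$ formed by the edges in $\Ei\setminus\Ei'$, and is positive semidefinite. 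If $G=G'$, then $L_H=0$ and $\mathcal{T}$ is iso-averaged for every choice of subspaces, which proves the ``if'' direction.

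For the converse, assume $G'\neq G$, so $H$ has at least one edge $(i,j)$. Because $L_H\succeq0$, the expression above vanishes if and only if $\ot{L_H^{1/2}}N\ot{Z}=0$. By~\cref{diagonal}, $\ran\ot{Z}=\Delta^\perp$, so the task is to choose subspaces $U_1,\ldots,U_n$ and some $\bm w\in\Delta^\perp$ with $\ot{L_H^{1/2}}N\bm w\neq0$. A convenient sufficient condition is that $N\bm w$ has different $i$-th and $j$-th components. I would first try $\E=\R$ with $U_k=\R$ for all $k$. Then $N=\oDelta^{-1}$, and $\oDelta$ is lower triangular with positive diagonal, hence invertible. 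The condition becomes $\oDelta^{-1}(\mathbf 1_n^\perp)\not\subseteq\ker L_H$.

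\textbf{Main obstacle.} This last step is where I expect difficulty. Both $\oDelta^{-1}(\mathbf 1_n^\perp)$ and $\ker L_H$ contain $\mathbf 1_n$: indeed $\mathbf 1_n^T\oDelta\mathbf 1_n=2|\Ei|-2|\Ei|=0$. So the containment can fail to be strict, for example when $H$ is a single edge and the two spaces have the same dimension, and dimension counting alone does not settle it. If this choice degenerates, I would use the freedom in the subspaces. Take $U_k=\{0\}$ for suitable nodes $k$, which kills the corresponding components of $N\bm w$ and leaves a triangular system on the remaining nodes. Then choose $\bm w$ supported near the edge $(i,j)$ and use forward substitution as in~\cref{PUi}, making $(N\bm w)_i\neq(N\bm w)_j$ explicitly. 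For instance, set $U_k=\{0\}$ for all $k>j$ and pick $\bm w$ to be nonzero only in the components $i$ and $j$.
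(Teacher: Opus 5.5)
\textbf{Gap in the converse.} Your reduction is correct and is the paper's own. $\mathcal{T}$ is iso-averaged iff $\oxi$ is, and your identity $2\oxi^*\oxi-\oxi-\oxi^*=-2\ot{Z^T}N^*\ot{L_H}N\ot{Z}$ is exactly the paper's, since $N^*\ot{L_H}N=\ochi_{\Lap(G\setminus G')}$. The ``if'' direction is fine, and so is recasting the converse as ``find $\bm w\in\Delta^\perp$ with $N\bm w\notin\ker\ot{L_H}$''.

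The problem is that the converse is never actually proved. You do not exhibit subspaces that work, and the one concrete choice you give can fail. Take the Malitsky--Tam pair with $n=3$: $G$ is complete on $\{1,2,3\}$ and $G'$ is sequential, so $H$ is the single edge $(i,j)=(1,3)$. Then ``$U_k=\{0\}$ for $k>j$'' is vacuous, and you are back at $U_1=U_2=U_3=\E$. With $\E=\R$ one finds $\oDelta^{-1}\bm w=\tfrac12(w_1,\,w_1+w_2,\,w_1)$ for every $\bm w\perp\mathbf 1_3$. Hence $(N\bm w)_1=(N\bm w)_3$ for all admissible $\bm w$. This agrees with the paper's last example, where $\mathcal{T}$ is iso-averaged when $\mathcal{U}=\E^n$. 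The cause is that keeping $U_k=\E$ at an intermediate node lets forward substitution carry the value at node $i$ through to node $j$, and nothing rules out the cancellation.

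\textbf{The fix.} Shrink $\mathcal{U}$ all the way, which is what the paper does with $\mathcal{U}^i$. Let $i$ be an endpoint of an edge of $H$, and take $U_i=\E$ and $U_k=\{0\}$ for all $k\neq i$. On this $\mathcal{U}$, $P_\mathcal{U}\ot{\oDelta}$ is just multiplication by $\oDelta_{i,i}=d_i$ in the $i$-th slot. So $N\bm w$ has $i$-th component $w_i/d_i$ and all other components zero, and
\[
\langle \ot{L_H}N\bm w,\,N\bm w\rangle=\frac{(L_H)_{i,i}}{d_i^2}\,\norm{w_i}^2 .
\]
Here $(L_H)_{i,i}$ is the degree of $i$ in $H$, hence at least $1$. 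Take $\bm w\in\Delta^\perp$ with $w_i=a\neq0$, $w_j=-a$ and zeros elsewhere; then the expression is positive. In your terms, $(N\bm w)_i=a/d_i\neq0=(N\bm w)_j$, and no forward substitution is needed.

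Equivalently, for this choice $\oxi=\tfrac1{d_i}\ot{(z_iz_i^T)}$, where $z_i^T$ is the $i$-th row of $Z$. This map is iso-averaged iff $\norm{z_i}^2=\Lap(G')_{i,i}\ge1$, the degree of $i$ in $G'$, equals its degree $d_i$ in $G$. That fails whenever $i$ is incident to an edge of $G$ missing from $G'$.
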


\begin{proof}
  By \cref{eq:T}, it is easy to show that the map $\mathcal{T}$ is iso-averaged if and only if $\oxi$ is iso-averaged. Observe that, by \crefpart{M}{M1}, we have $\oxi^*=\ot{Z^T}P_\mathcal{U}\mathcal{M}^{-1}_{\oDelta^T}\ot{Z}$. Recalling that $ZZ^T=\Lap(G')$, we obtain
  \begin{equation}\label{xi*xi}
    \oxi^*\oxi=\ot{Z^T}\mathcal{M}_{\oDelta^T}^{-1} P_\mathcal{U}\ot{\Lap(G')}P_\mathcal{U}\mathcal{M}^{-1}_\oDelta \ot{Z}.
  \end{equation}
  Given any matrix $K\in\R^{n\times n}$, denoting by
  \begin{equation}\label{chi}
    \ochi_K:=\mathcal{M}_{\oDelta^T}^{-1} P_\mathcal{U}\ot{K}P_\mathcal{U}\mathcal{M}^{-1}_\oDelta={\left(P_\mathcal{U}\mathcal{M}_{\oDelta}^{-1}\right)}^*\ot{K}\left(P_\mathcal{U}\mathcal{M}^{-1}_\oDelta\right),
  \end{equation}
  we simplify \cref{xi*xi} as $\oxi^*\oxi=\ot{Z^T}\ochi_{\Lap(G')}\ot{Z}$.

  Now, recall that $\Lap(G')=\Lap(G)-\Lap(G\setminus G')$, where $G\setminus G'$ is the graph $G$ without the edges of $G'$. Also, it holds that $\Lap(G)=(\oDelta+\oDelta^T)/2$ (see \cite[Remark~3.12]{graph-fb}), so we get $2\ochi_{\Lap(G)}=\ochi_\oDelta+\ochi_{\oDelta^T}$. Combining all of this with \cref{xi*xi}, we obtain
  \begin{equation}
    2\oxi^*\oxi=\ot{Z^T}\big(\ochi_\oDelta+\ochi_{\oDelta^T}-2\ochi_{\Lap(G\setminus G')}\big)\ot{Z}.
  \end{equation}
  By \crefpart{M}{M2} and \cref{M3}, we write $\ochi_{\oDelta}=\mathcal{M}_{\oDelta^T}^{-1} (P_\mathcal{U}\ot{\oDelta} P_\mathcal{U}\mathcal{M}^{-1}_\oDelta)=\mathcal{M}_{\oDelta^T}^{-1}P_\mathcal{U}=P_\mathcal{U}\mathcal{M}_{\oDelta^T}^{-1}$. Notice in this case that $\ot{Z^T}\ochi_\oDelta \ot{Z}=\oxi^*$. Similarly, we get $\ochi_{\oDelta^T}=\mathcal{M}_{\oDelta}^{-1}P_\mathcal{U}$ and $\ot{Z^T}\ochi_{\oDelta^T} \ot{Z}=\oxi$. Combining everything, we obtain
\begin{equation}
  2\oxi^*\oxi=\oxi^*+\oxi-2\ot{Z^T}\ochi_{\Lap(G\setminus G')}\ot{Z}.
\end{equation}
Therefore, $\oxi$ is iso-averaged if and only if
\begin{equation}\label{eq:equiv_iso}
\ot{Z^T}\ochi_{\Lap(G\setminus G')}\ot{Z}=0.
\end{equation}

If $G=G'$, then~\cref{eq:equiv_iso} trivially holds. Assume now that $\oxi$ is iso-averaged for all subspaces $\mathcal{U}$. Since $\ot{\Lap(G\setminus G')}$ is positive semidefinite, by \cref{chi} and \cite[Observation 7.1.8]{matrixAnalysisHornJohnson}, we have that $\ot{Z^T}\ochi_{\Lap(G\setminus G')}\ot{Z}$ is also positive semi-definite and
\begin{equation}\label{eq:kernels}
  \ker \left(\ot{Z^T}\ochi_{\Lap(G\setminus G')}\ot{Z}\right)=\ker\left(\Lap(G\setminus G')\mathcal{M}^{-1}_{\oDelta}P_\mathcal{U}\right),
\end{equation}
where \crefpart{M}{M2} has also been used. Then, \cref{eq:kernels} together with \cref{diagonal} implies that~\cref{eq:equiv_iso} is equivalent to the inclusion
\begin{equation}\label{inclusion}
  P_\mathcal{U}(\Delta^\perp)=\ran(P_\mathcal{U}\ot{Z})\subseteq\ker\big(\ot{\Lap(G\setminus G')}\mathcal{M}^{-1}_{\oDelta}\big)=\mathcal{M}_B\big(\okernel\big),
\end{equation}
where $\okernel:=\ker\big(\ot{\Lap(G\setminus G')}\big)$. By definition of $\mathcal{M}_\oDelta$, we get $\mathcal{M}_\oDelta\big(\okernel\big)=P_\mathcal{U}\ot{\oDelta}P_\mathcal{U}(\okernel)\oplus(\mathcal{U}^\perp\cap\okernel)$. Thus, \cref{inclusion} can be rewritten as
\begin{equation}\label{inclusion2}
  P_\mathcal{U}(\Delta^\perp)\subseteq P_\mathcal{U}\ot{\oDelta}P_\mathcal{U}(\okernel)\Leftrightarrow \Delta^\perp \subseteq \ot{\oDelta}P_\mathcal{U}(\okernel)+\mathcal{U}^\perp\Leftrightarrow \ot{\oDelta}P_\mathcal{U}(\okernel)^\perp\cap\mathcal{U}\subseteq\Delta.
\end{equation}
For each $i=1,\ldots,n$, let us choose in \cref{inclusion2} the subspace $\mathcal{U}^i\subseteq \E^n$ as the Cartesian product of all trivial subspaces except for the $i$-th component, which is equal to $\E$. Then $\Lambda_i:=P_{\mathcal{U}^i}(\Lambda)$ is the projection of $\Lambda$ onto the $i$-th coordinate. Thus, $\ot{\oDelta}(\Lambda_i)^\perp\cap\mathcal{U}_i$ is the Cartesian product of all trivial subspaces except for the $i$-th component, which is $(B_{i,i}\Lambda_i)^\perp=(d_i\Lambda_i)^\perp=\Lambda_i^\perp$. Since this subspace is contained in $\Delta$, this means that $\Lambda_i^\perp=\{0\}$ for all $i=1,\ldots,n$. Therefore, $\Lambda^\perp=\{0\}^n$ and $\okernel=\ker\big(\ot{\Lap(G\setminus G')}\big) =\E^n$, which implies that $G=G'$.
\end{proof}

\begin{remark}
The last inclusion in~\cref{inclusion2} shows that $Z$ does not really play a role in the iso-averagedness of $\mathcal{T}$. Further, if $\widetilde{Z} = Z O$ (with $O$ orthogonal), then $\widetilde{\mathcal{C}}=\ot{\widetilde{Z}^T}\mathcal{M}_\oDelta^{-1} P_\mathcal{U}\ot{\widetilde{Z}}= \ot{O^T} \mathcal{C} \ot{O}$ and, therefore, the normality of $\mathcal{T}$ is also independent of the specific decomposition $Z$ of $\operatorname{Lap}(G')$.
\end{remark}

Thanks to \cref{G=G'}, we are now able to answer the first open question in \cite{graphnumerical} and also explain the symmetry observed in \cite[Figure 2]{graphnumerical}. Indeed, if $G=G'$, then $\mathcal{T}$ is iso-averaged and, together with \crefpart{remark}{remark2}, we deduce the following result.

\begin{corollary}\label{cor:main}
  If $G=G'$, then the optimal relaxation parameter for the graph-based splitting algorithms is $\theta=1$. Furthermore, the distance to the limit point of the iterates generated by the algorithms with parameters $\theta$ and $2-\theta$ coincide.
\end{corollary}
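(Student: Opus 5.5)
The plan is to combine \cref{G=G'} with \cref{symmetry}, \cref{convex} and \crefpart{remark}{remark2}, applied to $T:=\mathcal{T}$ on the Euclidean space $\E^{n-1}$.

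First, since $G=G'$, \cref{G=G'} gives that $\mathcal{T}$ is iso-averaged for any linear subspaces $U_1,\ldots,U_n$. By \cref{prop:iso-normal}, $\mathcal{T}$ is then normal, and by \cref{Ttheta normal} so is every relaxation $\mathcal{T}_\theta$. The relaxed graph-based algorithm with parameter $\theta$ is exactly the fixed-point iteration of $\mathcal{T}_\theta=\theta\mathcal{T}+(1-\theta)\Id$. Hence, by \cref{symmetry}, $\mathcal{T}_\theta^k\to P_{\Fix\mathcal{T}}$ for every $\theta\in(0,2)$, so the limit point $P_{\Fix\mathcal{T}}\bm{v}^0$ does not depend on $\theta$.

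Second, for the distance statement, fix $\bm{v}^0$ and write $\overline{\bm{v}}:=P_{(\Fix\mathcal{T})^\perp}\bm{v}^0$. Since $\mathcal{T}_\theta$ acts as the identity on $\Fix\mathcal{T}$, we have
\[
\|\mathcal{T}_\theta^k\bm{v}^0-P_{\Fix\mathcal{T}}\bm{v}^0\|=\|\mathcal{T}_\theta^k\overline{\bm{v}}\|.
\]
\Crefpart{symmetry}{function} then gives $\|\mathcal{T}_\theta^k\overline{\bm{v}}\|=\|\mathcal{T}_{2-\theta}^k\overline{\bm{v}}\|$ for every $k$. This is exactly the equality of distances for the parameters $\theta$ and $2-\theta$.

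Third, for optimality, \cref{convex} says that $\theta\mapsto\|\mathcal{T}_\theta^k\overline{\bm{v}}\|$ is convex, and it is symmetric about $\theta=1$. A convex function $\phi$ that is symmetric about $1$ satisfies
\[
\phi(1)=\phi\bigl(\tfrac12\theta+\tfrac12(2-\theta)\bigr)\le\tfrac12\phi(\theta)+\tfrac12\phi(2-\theta)=\phi(\theta),
\]
so $\theta=1$ minimizes the distance to the limit at every iteration $k$. In addition, \cref{rate T theta} shows that $\theta=1$ strictly minimizes the optimal linear rate $\rho_1(\mathcal{T}_\theta)$ whenever $\theta\neq1$. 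So $\theta=1$ is optimal in both senses.

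The only delicate point is confirming that the relaxed graph-based algorithm coincides with the iteration of $\mathcal{T}_\theta$ for the $\mathcal{T}$ in \cref{eq:T}, that is, $\bm{v}^{k+1}=\bm{v}^k-\theta\,\ot{Z^T}\bm{x}^{k+1}$. This holds directly from \cref{variable x}, so I expect no real obstacle; the corollary is essentially bookkeeping.
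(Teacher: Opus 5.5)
Your proposal is correct and follows the paper's argument: the paper obtains the corollary by combining \cref{G=G'} (so $\mathcal{T}$ is iso-averaged) with \crefpart{remark}{remark2}, i.e.\ convexity from \cref{convex} plus symmetry from \cref{symmetry} applied to $P_{(\Fix\mathcal{T})^\perp}\bm{v}^0$, which is exactly what you spell out. Your extra appeal to \cref{rate T theta} is not needed for the statement, and its strict inequality breaks down in the trivial case $\mathcal{T}=\Id$ (e.g.\ when all $U_i=\{0\}$), where every $\theta$ gives the same rate $0$.
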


The next example provides a geometrical illustration of our findings.

\begin{example}\label{ex:geometric} Let $G=G'$ be the sequential graph (in which each node is connected to the subsequent one) with 3 nodes. This gives rise to the linear map $\mathcal{T}:\E^2\to\E^2$ given by
\begin{equation*}
  \mathcal{T}(\bm{v})=\bm{v}-\begin{pmatrix} x_1-x_2\\ x_2-x_3\end{pmatrix},\text{ where }\left\{\begin{array}{l}
  x_1=P_{U_1}(v_1),\\
  x_2=P_{U_2}\left(x_1+\frac{1}{2}(v_2-v_1)\right),\\
  x_3=P_{U_3}(2x_2-v_2).
  \end{array}\right.
\end{equation*}
By~\cite[Proposition~4.3 and Section~4.5]{graph-linear}, one has $\Fix\mathcal{T}=\ot{(\mathbf{1}_2)}(U_1\cap U_2\cap U_3)\oplus E$, where
\begin{equation*}
  E=\left\{\bm{e}\in \E^2\;\bigl\lvert\; e_1\in U_1^\perp, e_2-e_1\in U_2^\perp, -e_2\in U_3^\perp\right\},
\end{equation*}
and $\oplus$ denotes the Minkowski sum with orthogonal summands.

Consider the problem of finding a point in the intersection of three lines in $\E=\R^2$ passing through the origin. Specifically,  let $U_i:=\{a_i\}^\perp$ for some unit vectors $a_i\in\R^2$, $i=1,2,3$, with $a_1$ and $a_2$ linearly independent. After some algebraic manipulations, we obtain
\begin{equation*}
  E=\spa\left\{\begin{pmatrix} \mu a_1\\ a_3\end{pmatrix}\right\},\text{ where }\mu:=\frac{\det[a_3,a_2]}{\det[a_1,a_2]}.
\end{equation*}
Since $U_1\cap U_2\cap U_3=\{0\}$, we deduce that $\Fix\mathcal{T}=E$. By~\cite[Theorem~4.8]{graph-linear}, for any starting point $\bm{v}^0=(v^0_1,v^0_2)\in\R^2\times\R^2$, the governing sequence $\bm{v}^k=\mathcal{T}^k(\bm{v}^0)$ converges as $k\to\infty$ to
\begin{equation*}
  \bm{e}:=P_E(\bm{v}^0)=\frac{\mu\langle a_1,v_1\rangle +\langle a_3,v_2\rangle}{\mu^2\|a_1\|^2+\|a_3\|^2}\begin{pmatrix} \mu a_1\\ a_3\end{pmatrix}.
\end{equation*}

For illustration, for a given starting point $\bm{v}^0\in\R^2\times\R^2$ and three specific lines, we computed the iterates generated for three relaxation parameters, namely, $0.2$, $1$, and $2-0.2$. Since $\theta=1$ is the optimal relaxation parameter, it minimizes the function $f_k(\theta):=\|\mathcal{T}_\theta^k (\bm{v}^0-\bm{e})\|=\|\mathcal{T}_\theta^k (\bm{v}^0)-\bm{e}\|$ for all~$k\in\N$. In particular, for $k=1$, it minimizes $f_1(\theta)^2=\|\mathcal{T}_\theta (\bm{v}^0)_1-\bm{e}_1\|^2+\|\mathcal{T}_\theta (\bm{v}^0)_2-\bm{e}_2\|^2$, whose value is equal to the sum of the areas of the two squares determined by $\{\mathcal{T}_\theta (\bm{v}^0)_1,\bm{e}_1\}$ and $\{\mathcal{T}_\theta (\bm{v}^0)_2,\bm{e}_2\}$ (see \cref{fig:geom_a}). Further, \cref{cor:main} shows that $f_1(\theta)=f_1(2-\theta)$, so the sum of the areas of the two respective pairs of squares (blue and green) coincides. On the other hand, in \cref{fig:geom_b}, we represent the points $p_\theta^k:=(\|\mathcal{T}^k_\theta (\bm{v}^0)_1-\bm{e}_1\|,\|\mathcal{T}^k_\theta (\bm{v}^0)_2-\bm{e}_2\|)$ for $k=1,\ldots,10$. The property $f_k(\theta)=f_k(2-\theta)$ is depicted by the fact that $p_{\theta}^k$ and $p_{2-\theta}^k$ have the same norm for $\theta=0.2$, i.e., they lie in the same circle of center the origin and radius $f_k(\theta)$. Lastly, we observe that setting the relaxation parameter to $1$ provides a clear advantage, as the algorithm converges to the limit point~$\bm{e}$ faster than with the other two choices.

\begin{figure}\centering

  \begin{subfigure}{0.49\textwidth}
    \centering
    \includegraphics[height=6.8cm]{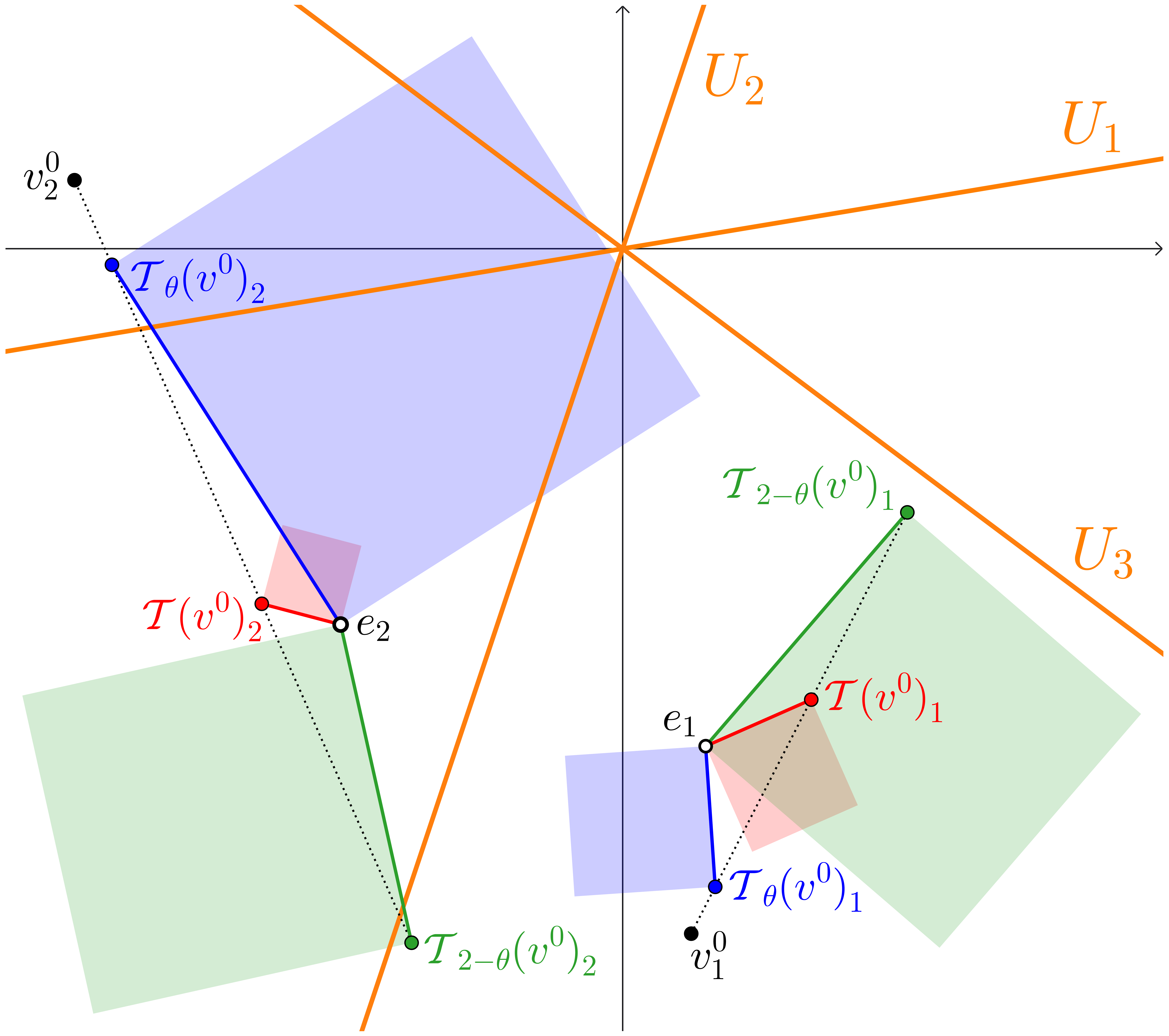}
    \caption{The value $\|\mathcal{T}_\theta^k(\bm{v})-\bm{e}\|^2$ can be interpreted as the sum of the areas of two squares, which is minimized in the unrelaxed case\label{fig:geom_a}}
  \end{subfigure}\hfill
  \begin{subfigure}{0.49\textwidth}
    \centering
    \includegraphics[height=6.8cm]{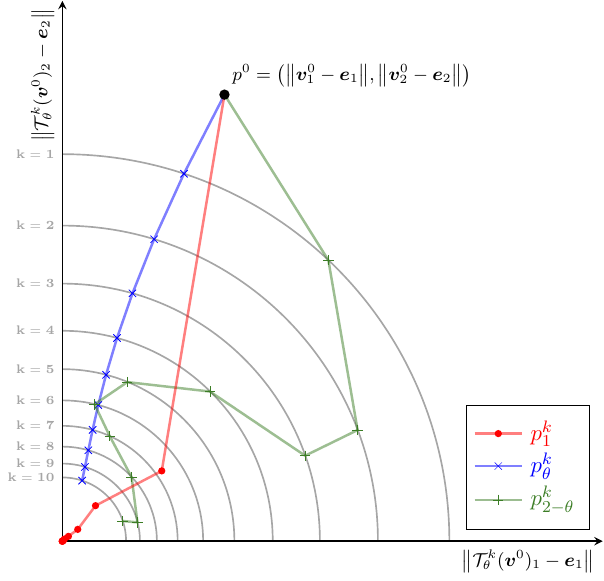}
    \caption{The distance of the iterates to the limit point coincides for $\theta$ and $2-\theta$, while the unrelaxed map provides the best performance\label{fig:geom_b}}
  \end{subfigure}
\caption{Geometrical interpretation of \cref{ex:geometric}, where $a_1=(-1,6)$, $a_2=(-3,1)$, $a_3=(-3,-4)$, $\bm{v}^0=\big((1,-10),(-8,1)\big)$, and $\theta=0.2$\label{fig:geom}}
\end{figure}
\end{example}

We conclude by presenting some examples of pairs of graphs $(G,G')$ for which the linear map $\mathcal{T}$ fails to be normal or iso-averaged, as well as an example in which it is iso-averaged but $G\neq G'$.

\begin{example}[Lack of normality and iso-averagedness of $\mathcal{T}$]
Let $\mathcal{U}=\E^n$, with $n\geq 4$. Then $\oxi=\ot{C}$, with
    \[C = Z^T\oDelta^{-1} Z,\]
and thus, $\mathcal{T}$ is normal or iso-averaged if and only if $C$ is so.
\begin{itemize}[leftmargin=*]
    \item \emph{$\mathcal{T}$ is not normal.} Let $G'=(\mathcal{N},\Ei')$ be the parallel down graph (a star graph whose center is the last node) and let $G=(\mathcal{N},\Ei)$ with $\Ei=\Ei'\cup\{(1,2)\}$. Then $B$ and $Z$ are given by blocks as follows
        $$B=\begin{bmatrix*}
        2&0 &\mathbf{0}_{n-3}^T&0\\
        -2& 2 & \mathbf{0}_{n-3}^T&0\\
        \mathbf{0}_{n-3} & \mathbf{0}_{n-3} & \Id_{n-3}&\mathbf{0}_{n-3}\\
        -2 & -2 & -2\cdot \mathbf{1}_{n-3}^T & n-1
        \end{bmatrix*}\text{ and }Z=\begin{bmatrix*}\Id_{n}\\ -\mathbf{1}_{n-1}^T\end{bmatrix*},$$
        from where
        $$C=\frac{1}{n-1}\begin{bmatrix*}
        \frac{n-3}{2} & 0 & -\mathbf{1}_{n-3}^T\\ \frac{n-3}{2} & \frac{n-1}{2} & -\mathbf{1}_{n-3}^T\\ -\mathbf{1}_{n-3} & \mathbf{0}_{n-3} & (n-1)\Id_{n-3}-\mathbf{1}_{(n-3)\times(n-3)}
        \end{bmatrix*}.$$
        Since the squared norm of the first row of $C$ differs from that of the first column, we have $C^TC\neq CC^T$, and thus $C$ is not normal.

    \item \emph{$\mathcal{T}$ is normal but not iso-averaged.} Let $G$ be the biparallel graph (the union of a parallel up and a parallel down, which are star graphs whose center is the first and the last node, respectively) and $G'$ the parallel up graph. Then $B$ and $Z$ can be described by blocks as follows:
    \[B=\begin{bmatrix}
        n-1 & \mathbf{0}_{n-2}^T & 0 \\
        -2\cdot\mathbf{1}_{n-2} & 2\Id_{n-2} & \mathbf{0}_{n-2} \\
        -2 & -2\cdot\mathbf{1}_{n-2}^T & n-1
    \end{bmatrix}
        \text{ and }
        Z=\begin{bmatrix}
        \mathbf{1}_{n-1}^T \\
        -\Id_{n-1}
        \end{bmatrix}\Rightarrow C=\begin{bmatrix}
          \frac12 \Id_{n-2} & \mathbf{0}_{n-2}\\ \mathbf{0}_{n-2}^T & 0\end{bmatrix}.
          \]
          Since $C$ is symmetric, then it is normal. However, we can observe that $C$ is not iso-averaged, since the eigenvalue $\lambda=\frac12$ does not satisfy~\cref{isoav lambda}.
    \item \emph{$\mathcal{T}$ is iso-averaged with $G\neq G'$.} Let $G'=(\mathcal{N},\Ei')$ be the sequential and let $G=(\mathcal{N},\Ei)$ be the ring graph, with $\Ei=\Ei'\cup \{(1,n)\}$. Notice that the resultant algorithm is known as the Malitsky--Tam algorithm (see \cite[Algorithm 1]{malitsky2023resolvent}). In this case, $Z$ is given by $Z_{j,j}=1$, $Z_{j+1,j}=-1$, for $j=1,\ldots,n-1$, and zero otherwise. On the other hand, $B$ is defined componentwise as $B_{i,i}=2$  for $i=1,\ldots,n$, $B_{i+1,i}=B_{n,1}=-2$ for $i=1,\ldots,n-1$, and zero otherwise. Hence, one computes
    \[C=\begin{cases}
      \frac12 & \text{if }i=j,\\
      -\frac12 & \text{if }i=j-1\text{ or }(i,j)=(n-1,1),\\
      0&\text{otherwise}.
    \end{cases}\]
    It is not difficult to check that $C$ is iso-averaged, since $C=\frac{1}{2}\bigl(\Id+(-P)\bigr)$, where $P$ is a permutation matrix (and thus orthogonal).
\end{itemize}
\end{example}


\paragraph{Acknowledgments} The authors were partially supported by Grant PID2022-136399NB-C21 funded by ERDF/EU. C\'esar L\'opez Pastor was supported by Grant PREP2022-000118 funded by MICIU/AEI/10.13039/501100011033 and by ``ESF Investing in your future''.


\printbibliography

\end{document}